\newtheorem{theorem}{Theorem}[section]
\newtheorem{lemma}[theorem]{Lemma}
\newtheorem{cor}[theorem]{Corollary}
\theoremstyle{definition}
\theoremstyle{remark}
\newtheorem{remark}[theorem]{Remark}
\numberwithin{equation}{section}
\newcommand{\rd}{{\mathbb R^d}}
\newcommand{\zd}{{\mathbb Z^d}}
\newcommand{\N}{{\mathbb N}}
\newcommand{\nat}{{\mathbb N}}
\begin{document}
\sloppy
\title[Graph distances of continuum long-range percolation]{Graph distances of continuum long-range percolation} 

\author{Ercan S\"onmez}
\address{Ercan S\"onmez, Department of Statistics, University of Klagenfurt, Universit\"atsstraße 65--67, 9020 Klagenfurt, Austria}
\email{ercan.soenmez\@@{}aau.at} 


\begin{abstract}
We consider a version of continuum long-range percolation on finite boxes of $\rd$ in which the vertex set is given by the points of a Poisson point process and each pair of two vertices at distance $r$ is connected with probability proportional to $r^{-s}$ for a certain constant $s$. We explore the graph-theoretical distance in this model. The aim of this paper is to show that this random graph model undergoes phase transitions at values $s=d$ and $s=2d$ in analogy to classical long-range percolation on $\zd$, by using techniques which are based on an analysis of the underlying Poisson point process.
\end{abstract}

\keywords{Diameter, graph distance, random graphs, long-range percolation, random connection model, Poisson process}
\subjclass[2010]{Primary 05C12; Secondary 05C80, 60K35, 60G55, 82B20.}
\maketitle

\baselineskip=18pt

\section{Introduction}

\textit{Small-world phenomenon} is a socio-psychological notion originated from a series of social experiments conducted by Milgram \cite{Mil}. These experiments suggest that human society is a network of 'small-world' type in the sense that arbitrary groups or persons are connected by very short paths. The problem of small degrees of separation arises in various networks \cite{Watts1, Watts3} and has led to many interesting yet open challenges in probability theory.

A classical approach modeling (social) networks is to use random graph models \cite{Watts2}. In such a model the node set may correspond to a geographical location and the edges correspond to links. An accepted model taking into account the property of decreasing distances is long-range percolation \cite{Schul, NewSchul, AizNew} in which the node set is assumed to be the integer lattice $\mathbb{Z}^d$ and the probability of an edge between two arbitrary nodes asymptotically has a polynomial decay in their distances, i.e. for every $u$ and $v$ in $\zd$ there is an edge connecting $u$ and $v$ with some probability $p(u,v)$ only depending on the Euclidean distance of $u-v$ and the origin such that
$$p(u,v) \sim \beta \| u-v\|^{-s}$$
for certain constants $\beta$ and $s$. One way of measuring distances is to use the graph-theoretical distance defined as the minimal number of edges used in a path connecting an arbitrary pair of nodes.

Extending the work of \cite{BenBer} the authors in \cite{CGS} studied the distance scaling of long-range percolation on finite boxes under the assumption that nearest neighbor edges are present almost surely. In particular, their results have revealed four distinct regimes of behavior, namely $s=d$, $d < s < 2d$, $s=2d$ and $s >2d$.

In this contribution we focus on a continuous analogue of long-range percolation on $\mathbb{Z}^d$, the main and crucial difference being that the node set is randomly scattered over $\rd$ as the realization $\mathcal{P}$ of a homogeneous Poisson point process. One on the first papers which rigorously studied such a model is \cite{Penrose}. See \cite{Last, Soen} and references therein for recent results on the \textit{random connection model}. Given a realization of $\mathcal{P}$ in the random connection model an edge between two nodes $x,y \in \mathcal{P}$ is present with some probability $g(x-y)$ only depending on the Euclidean distance $\| x-y \|$. Under some integrability conditions on the function $g$, see \cite[p. 536]{Penrose}, the random connection model has the specific feature that the occurrence of percolation, i.e. existence of an infinite cluster, only depends on the intensity constant of $\mathcal{P}$. Thus, percolation may occur for edge probabilities under which there is no percolation in classical long-range percolation on $\zd$. Following and motivated by discrete long-range percolation in this paper we mainly consider the case
\begin{align} \label{revg}
g(x) = 1 - \exp ( -\beta \|x\|^{-s} ), \quad x \in \rd,
\end{align}
for certain $\beta,s \in (0, \infty)$. Our main target is the behavior of the chemical distance of a random connection model in finite boxes. It is of interest to study such distances and to compare them to the results obtained in \cite[Theorem 3.1]{CGS}. This is the main goal of this paper.

Let us finally remark that a slightly similar question regarding chemical distances of long-range percolation on $\rd$ has been recently addressed in \cite{Biskup} using different methods. In contrary to the present article the focus there is on the case $s \in (d, 2d)$. For related results we also refer to \cite{DW}, which is based on \cite{DHH} and introduces an inhomogeneous version of the random connection model. In the model they introduce the vertices are equipped with random iid weights and each pair of vertices is connected with probability increasing in the vertex weights, see Section 2 below for details. In \cite{DW} the authors also address questions regarding chemical distances in several regimes. In some regimes their results are of the same type as some results in this paper.
Moreover, we will borrow ideas from \cite{CGS} and adapt them to our setting. This results in the fact that our calculations are different and crucially rely on the structure of the vertex set which is characterized by the underlying Poisson point process.

We end this section with an outline of the remainder of this article. In the following section we rigorously introduce the model under investigation and formulate the main results. The proceeding sections are divided into the proofs of the individual cases of our main Theorem.

\section{Model and main results}

We construct a random graph in the following way. The vertex set is given by points of a Poisson point process and the edge set is given by connecting the nodes independently. Let $\mathcal{P}$ be a homogeneous Poisson point process with intensity $\rho >0$, that is $\mathcal{P}$ satisfies the following \cite{Kall}:
\begin{itemize}
	\item[(i)] For every set $B \in \mathcal{B} (\rd)$ the random variable $\mathcal{P} (B)$ has a Poisson distribution with parameter $\rho |B|$ with $|B|$ denoting the Lebesgue measure of the Borel set $B$ and $\mathcal{B} (\rd)$ denoting the Borel $\sigma$-algebra of $\rd$.
	\item[(ii)] For every $n \in \nat$ and disjoint sets $B_1, \ldots, B_n \in \mathcal{B} (\rd)$ the random variables\newline $\mathcal{P} (B_1), \ldots, \mathcal{P} (B_n)$ are independent.
\end{itemize}
Given a realization of $\mathcal{P}$ we write $\mathcal{P} = (X_n)_{n \in \mathbb{N}}$, see \cite[Corollary 6.5]{LP}. Thus, the vertex set will be given by $\mathcal{P}$.

From now on, denote by $\| \cdot \| = \| \cdot \|_1$ the $l_1$-norm in $\rd$. Let $g \colon \rd \to [0,1]$ be given by \eqref{revg}. Consider a set of Bernoulli random variables $(E_{x,y}: \{x,y\} \in \mathcal{A}, x \neq y)$ on a probability space $(\Omega, \mathcal{F}, P)$, where the indexing set $\mathcal{A}$ consists of all unordered pairs $\{x,y\}$ of elements of $\rd$. We use the Kolmogorov consistency theorem and choose $(E_{x,y}: \{x,y\} \in \mathcal{A}, x \neq y)$ such that $P (E_{x,y} = 1) = g(x-y)$ for all $\{x,y\} \in \mathcal{A}$ with $x \neq y$, independently. Thus we obtain a random graph with vertex set given by the points  $(X_n)_{n \in \mathbb{N}}$ of $\mathcal{P}$ and by including an edge $(X_i, X_j)$ if and only if $E_{X_i, X_j} = 1$. By abuse of notation we denote the joint probability measure of the point process $\mathcal{P}$ with intensity $\rho >0$ and edge occupation by $P$. We denote by $\mathcal{C}(0)$ the connected component, referred to as the cluster, containing the origin (if it belongs to $\mathcal{P}$). Note that $\mathcal{C}(0)$ is almost surely connected by definition. In \cite[Theorem 1]{Penrose} it is shown that under the condition
\begin{align} \label{revg2}
0 < \int_\rd g(x) dx < \infty
\end{align}
there exists a critical intensity of the Poisson point process denoted $\rho_c$ with $\rho_c \in (0,\infty)$ such that
$$P \big( |\mathcal{C}(0)| = \infty \big) >0$$
for all $\rho > \rho_c$. Moreover, there exists a unique infinite connected component with probability one. Note that \eqref{revg2} is fulfilled for $s>d$.

Given a graph $G=(V,E)$ the graph distance on $G$ between two nodes $x,y \in V$ is defined as $D(x,y)$ corresponding to the number of edges in $E$ on a shortest path between $x$ and $y$, with the convention that $D(x,x) = 0$ and $D(x,y) = \infty$ if $x$ and $y$ are not in the same connected component. The diameter of $G$ denoted by $D_G$ is defined as
$$ D_G = \max_{x,y \in V} D(x,y),$$
i.e. the maximal graph distance between two nodes in $G$. 

Let $N$ be an integer and consider a partition of the box $[0,N]^d$ into $N^d$ subcubes $I_1, \ldots, I_{N^d}$ each of unit side length. Let $x_i \in I_i$ for $i=1, \ldots, N^d$ and denote by $\mathcal{C}_N$ the largest connected component in $[0,N]^d$ (with a deterministic rule if there is more than one largest connected component). Let $D(N)$ be the random diameter $D_{\mathcal{C}_N}$ of $\mathcal{C}_N$. In order to measure events involving $D(0,x)$ for $x \in \rd$ we need to ensure that the origin and the point $x$ are elements of the vertex set. Therefor, we also consider the probability measure $P_{0,x}( \cdot)=P( \cdot| 0,x \in \mathcal{P})$, which is known as the twofold Palm measure. Observe that $ P( \cdot| 0,x \in \mathcal{C}_\infty) = P_{0,x}( \cdot| 0,x \in \mathcal{C}_\infty)$, where $\mathcal{C}_\infty$ denotes the (unique) infinite connected component if it exists. We now state our main results as follows.

\begin{theorem} \label{MR}
	There exist positive and finite constants $C_1, C_2, C_3$ and $0 < \eta < 1$ depending on $\beta$ and $d$ such that the following hold:
	\begin{itemize}
		\item [1.] If $s > 2d$ then
		$$ \lim_{\|x\| \to \infty} P \Big( D(0,x) \geq C_1 \|x\| \Big| 0,x \in \mathcal{P} \Big) = 1.$$
		\item[2.] If $s=2d$ and $d=1$ then
		$$ \lim_{N \to \infty} P\Big( D(N) \geq N^{\eta}   \Big) = 1.$$
		\item[3.] Let $d<s<2d$.
		\begin{itemize}
			\item [(i)] For all $\varepsilon>0$ it holds
			$$ \lim_{\|x\| \to \infty} P \Big( \log D(0,x) \geq (1- \varepsilon) \log \log \|x\| \Big| 0,x \in \mathcal{P} \Big) = 1.$$
			\item [(ii)] Let $\rho > \rho_c$. For all $\varepsilon>0$ it holds
			$$ \lim_{\|x\| \to \infty} P \Big( \log D(0,x) \leq (1+ \varepsilon) \frac{\log 2}{\log (2d/s)} \log \log \|x\| \Big| 0,x \in \mathcal{C}_\infty \Big) = 1,$$
			where $\mathcal{C}_\infty$ denotes the (unique) infinite connected component (which almost surely exists).
		\end{itemize}
		
		\item[4.] Let $s=d$.
		\begin{itemize}
			\item [(i)] It holds
			$$ \lim_{N \to \infty} P\Big( D(N) \geq C_2 \frac{\log N}{\log \log N}  \Big) = 1.$$
			\item [(ii)] Let $\rho > \rho_c$ as above. It holds
			$$ \lim_{\|x\| \to \infty} P \Big(  D(0,x) \leq C_3 \frac{\log \|x\|}{\log \log \|x\|} \Big| 0,x \in \mathcal{C}_\infty \Big) = 1,$$
			where $\mathcal{C}_\infty$ denotes an infinite connected component (which almost surely exists).
		\end{itemize}
	\end{itemize}
\end{theorem}

Here are some comments regarding our main result. In Theorem \ref{MR} we are able to provide an analogy that in our model the random graph behaves similarly to classical discrete long-range percolation as in \cite[Theorem 3.1]{CGS} and \cite[Theorem 1]{Bn1}. 
Part 3 (ii) of Theorem \ref{MR} is due to the proof of the upper bound in \cite[Theorem 3.6 (b1)]{DW}. Let us explain this in some detail. As mentioned in the introduction \cite{DW} considers an inhomogeneous version of our model, in which two vertex points $x,y \in \mathcal{P}$ are connected with probability
$$1- \exp(- \beta W_x W_y \| x-y\|^{-s}),$$
where the weights $W_x, x \in \mathcal{P}$, are iid random variables with values in $[0, \infty)$, see \cite[Section 2]{DW} for more details. There, the proof of the bound in part 3 (ii) relies on \cite[Theorem 3.4 and Lemma 7.7]{DW}. In both of these proofs the authors (1) bound the connection probability in the inhomogeneous model by the connection probability of the (homogeneous) random connection model and (2) make use of Chernoff bounds for Poisson random variables in order to estimate the number of vertices in boxes. They prove this specific result for the homogeneous model and, hence, the proof of part 3 (ii) follows one-to-one from the proof of the upper bound on the graph distance in \cite[Theorem 3.6 (b2)]{DW}. The main contribution in the following are the proofs of all the remaining parts. It will be clear that our methods rely on \cite{CGS, DWdisc, DW}.

Let us finally remark that we do not investigate the diameter in the case $s < d$. For classical long-range percolation it was shown in \cite{BenKes} that for $s <d$ the diameter is $\lceil \frac{d}{d-s} \rceil$ with high probability. We believe that this might be true in our model as well, which is an interesting problem to investigate, but beyond the scope of this paper.

From now on throughout this entire manuscript we denote by $c$ a universal constant which might be different in each occurrence. Moreover, for all events $A_N$ depending on the integer $N$ we say that $A_N$ occurs with high probability if $P(A_N) \to 1$ as $N \to \infty$. Furthermore, we will use the notation $x \leftrightarrow y$ if there is an edge connecting the vertices $x$ and $y$. Moreover, we will use the notation $x \nleftrightarrow y$ if such an edge is absent. We proceed by proving part 1 of Theorem \ref{MR} in the following section. 

\section{The case $s > 2d$}
\noindent \textit{Proof of Theorem \ref{MR}, part 1.} We note that the claim to be proven is the continuous analogue of \cite[Theorem 1]{Bn1} and our proof is an adaption to our model. Moreover, such an adaption has also been carried out in \cite[Theorem 8 (b2)]{DWdisc} and \cite[Theorem 3.6 (b2)]{DW}. The idea is to use a renormalization technique which we outline in detail now.

We consider a sequence $(a_n)_{n \in \mathbb{N}_0}$ with values in $\mathbb{N}$ and another sequence $(m_n)_{n \in \mathbb{N}_0}$ with $m_0 = a_0$ and
$$m_n = \prod_{i=0}^{n} a_i = a_n m_{n-1}, \quad n \in \mathbb{N}.$$
Then we define so-called $n$-stage boxes, $n \in \mathbb{N}_0$, as
$$B_{m_n} (x) = x + [0, m_{n}-1]^d, \quad x \in \rd.$$
Based on this definition for $n \in \mathbb{N}$ the disjoint $(n-1)$-stage boxes
$$B_{m_{n-1}} (x + y m_{n-1}) = x + y m_{n-1} [0, m_{n}-1]^d,$$
with $y \in [0, a_{n}-1]^d \cap \mathbb{Z}^d$, are called the children of the $n$-stage box $B_{m_n} (x)$, $x \in \rd$. Note that the total number of such children is $a_n^d$. Now we recall the definition of \textit{good} $n$-stage boxes \cite[Definition 12]{DWdisc}. Fix $n \in \mathbb{N}_0$ and $x \in \rd$.
\begin{itemize}
	\item [(i)] We say that a 0-stage box $B_{m_0} (x)$ is good if there is no edge in $B_{m_0} (x)$ with length larger than $\frac{m_0}{100}$.
	\item [(ii)] We say that an $n$-stage box $B_{m_n} (x)$, $n \in \mathbb{N}$, is good if for all $j \in \{-1,0,1\}^d$
	\begin{itemize}
		\item [1.] there is no edge in $B_{m_n} (x + j \frac{m_{n-1}}{2})$ with length larger than $\frac{m_{n-1}}{100}$ and
		\item [2.] there are at most $3^d$ children of $B_{m_n} (x + j \frac{m_{n-1}}{2})$ that are not good.
	\end{itemize}
\end{itemize}
Having recalled the definition of good boxes the proof now relies on showing a linear lower bound on the graph distance within good boxes and that centered boxes of sufficiently large side lengths are good. More specifically, we will invoke the following result, \cite[Lemma 15]{DWdisc} and \cite[Proposition 3]{Bn1}, which also has been used in the inhomogeneous version of our continuum model \cite[Theorem 3.6 (b2)]{DW}.

\begin{lemma} \label{mrev1}
	Assume that $a_n = n^2$ for $n \in \N$. Suppose there exists $n_0 \in \N$ such that for all $n \geq n_0$ the following hold:
	\begin{itemize}
		\item [1.] for all $j \in \{-1,0,1\}^d$ the $n$-stage box $B_{m_n} ( j \frac{m_{n}}{2})$ is good
		\item [2.] for all $l >n$ the $l$-stage boxes $\hat{B}_{m_l}$ centered at $B_{m_n} (0)$ are good.
	\end{itemize}
	Then there is a constant $C \in (0, \infty)$ such that $D(x,y) \geq C\|x-y\|$ if $x,y \in B_{m_n} (0)$ with $\|x-y\| > \frac{m_n}{8}$.
\end{lemma}

In order to apply Lemma \ref{mrev1} we prove the following assertion.

\begin{lemma} \label{mrev2}
	Let $r,t \in (0, \infty)$. Suppose that $\Lambda_r$ is a box of side length $r$. Then
	$$ P \Big( \text{there is an edge in } \Lambda_r \text{ with length at least } t \Big) \leq cr^d t^{d-s}$$
	for some constant $c$.	
\end{lemma}

\begin{proof}
	Let 
	$$M(t) = \sum_{x,y \in \mathcal{P} \cap \Lambda_r} \mathbf{1}_{\{ \|x-y\| \geq t \}} \mathbf{1}_{\{ x \leftrightarrow y \}}$$
	be the total number of edges in $\Lambda_r$ of length at least $t$. Recall that the Mecke formula, see \cite[Theorem 4.4]{LP}, allows to rewrite the expectation of a sum over tuples of points in a Poisson point process as an integral over tuples of points in $\rd$. Due to this, we obtain
	\begin{align*}
	\mathbb{E} [M(t)] \leq c r^d \int_{ \|z\| \geq t} g(z) dz \leq cr^d \int_{ \|z\| \geq t} \| z\|^{-s} dz = cr^d t^{d-s},
	\end{align*}
	where we used $1-e^{-x} \leq x$ in the last inequality and a change to polar coordinates. Thus, the Markov inequality yields
	$$ P \Big( M(t) \geq 1 \Big) \leq 	\mathbb{E} [M(t)] \leq c r^d t^{d-s}.$$
\end{proof}
Next we prove the analogue of \cite[Lemma 1]{Bn1} and \cite[Lemma 14]{DWdisc} in our model, which will enable us to conclude the proof.

\begin{lemma} \label{mrev3}
	Suppose $a_n = n^2$ for $n \in \N$. For $a_0$ sufficiently large it holds
	$$\sum_{n \in \N_0} P \Big( B_{m_n} (0) \text{ is not good} \Big) < \infty.$$	
\end{lemma}

\begin{proof}
	Define $p_n = P ( B_{m_n} (0) \text{ is not good} )$, $n \in \N_0$. We prove that for all $a_0 = m_0$ sufficiently large and $n \in \N_0$
	\begin{align} \label{mrev3p1}
	p_n \leq c_d (n+1)^{-4d} e^{-2n},
	\end{align}
	where $c_d \in (0, \infty)$ is independent of $n$. This is done by induction.
	
	For $n=0$ from Lemma \ref{mrev2} we obtain
	\begin{align} \label{mrev3p2}
	\begin{split}
	p_0 &= P \Big( \text{there is an edge in } B_{m_0} (0) \text{ with length larger than } \frac{m_0}{100} \Big) \\
	& \leq c m_0^d \Big( \frac{m_0}{100} \Big)^{d-s} < 3^{-d} 2^{-4d-1}e^{-2}
	\end{split}
	\end{align}
	once $m_0$ is sufficiently large. Since $a_1 =1$, $B_{m_1} (0)$ has only one child, yielding
	$$p_1 \leq 3^d p_0 \leq c3^d 2^{-8d-1}e^{-4}$$
	by using \eqref{mrev3p2} and $m_0$ sufficiently large. Thus, \eqref{mrev3p1} is true for all $n\in \{0,1 \}$. Now for the induction step assume that \eqref{mrev3p1} is true for all $0 \leq k \leq n-1$ with some $n \geq 2$. By definition the $n$-stage box $B_{m_n} (0)$ is not good if at least one of the $3^d$ translations $B_{m_n} (j \frac{m_{n-1}}{2})$, $j \in \{-1,0,1\}^d$, fails to have property 1. or 2. in the definition of good boxes from above. Set $\gamma = s-2d >0$. By Lemma \ref{mrev2} we get
	\begin{align*}
	p_n & \leq 3^d \Big( c a_n^{s-d} m_n^{-\gamma} + \\
	& \quad P \big( \text{there are at least $3^d+1$ children of $B_{m_n} (0)$ that are not good} \big) \Big)
	\end{align*}
	for $m_0$ sufficiently large. Note that the event in the latter probability implies that there are at least two children $B_{m_{n-1}} (y)$ and $B_{m_{n-1}} (z)$ of $B_{m_{n}} (0)$ that are not good and separated by distance at least $2m_{n-1}$. Therefor, since $m_i = a_0 (i!)^2$ by definition, $i \geq 0$, the events $\{B_{m_{n-1}} (y) \text{ is not good} \}$ and $\{B_{m_{n-1}} (z) \text{ is not good} \}$ are independent. Using this, we further obtain
	\begin{align*}
	p_n & \leq 3^d \Big( c a_n^{s-d} m_n^{-\gamma} + \binom{a_n^d}{2} p_{n-1}^2 \Big) \leq 3^d \Big( c a_n^{s-d} m_n^{-\gamma} + {a_n^{2d}} p_{n-1}^2 \Big) \\
	& =  3^d \Big( c n^{2(\gamma +d)} (m_0 (n!)^2)^{-\gamma} + n^{4d} p_{n-1}^2 \Big) = c3^d m_0^{-\gamma} n^{2(\gamma +d)} (n!)^{-2\gamma} + 3^d n^{4d} p_{n-1}^2.
	\end{align*}
	Thus, there exists $n_0 \in \N$ such that for all $n \geq n_0$ and $m_0$ sufficiently large
	\begin{align} \label{mrev3p4}
	p_n \leq 3^{-d} 2^{-4d-2} e^{-2} (n+1)^{-4d} e^{-2n}  + 3^d n^{4d} p_{n-1}^2.
	\end{align}
	Moreover, one can choose $m_0$ large enough such that \eqref{mrev3p4} also holds for all $2 \leq n \leq n_0$. Finally using this and the induction assumption we obtain
	\begin{align*}
	p_n & \leq  3^{-d} 2^{-4d-2} e^{-2} (n+1)^{-4d} e^{-2n}  + c_d^2 n^{-4d} e^{-4n+4} \\
	& \leq c (n+1)^{-4d} e^{-2n} + c n^{-4d} e^{-4n+4} \leq   c (n+1)^{-4d} e^{-2n} \Big( 1+ \big( \frac{n+1}{n} \big)^{4d} e^{-2n} \Big) \\
	& \leq c (n+1)^{-4d} e^{-2n}
	\end{align*}
	with $c$ a constant only depending on $d$. This proves \eqref{mrev3p1} and concludes the proof.
\end{proof}
Now we are in position to complete the proof of part 1 of our main Theorem. Lemma \ref{mrev3} implies that there is $n_0 \in \N_0$ such that for all $l \geq n_0$ the $l$-stage boxes $\hat{B}_{m_l}$ are good almost surely. Eventually, with probability one, by Lemma \ref{mrev1}, as the graph distance between good boxes is at least linear, for sufficiently large $n$ and $x \in \mathcal{P}$ with $\| x\| > \frac{m_n}{8}$ we have $D(0,x) \geq c \| x\|$ for some constant $c \in (0, \infty)$. \hfill $\Box$

\section{ The case $s=2d$}

The proof of the lower bound for the one-dimensional case $d=1$, $s=2$ uses the notion of an isolated point and the notion of a cut interval. 
Let $J \subset [0,N]$ be an interval. We say that a vertex $x \in \mathcal{P} \cap J$ is isolated with respect to $J$ if there is no edge connecting $x$ to another point $y \in \mathcal{P} \cap J$ and denote by $M(J)$ the number of such vertices. Before proving the lower bound we first show that the following Lemma is true.

\begin{lemma} \label{H2}
	There is a constant $c$ such that
	$$\lim_{|J| \to \infty} P\Big( M(J) \geq c |J| \Big) = 1.$$
\end{lemma}

\begin{proof}
	For notational simplicity in this proof we assume that $J=[0,N]$ and denote $M(N) = M([0,N])$. We have
	$$M(N) = \sum_{x\in \mathcal{P}} \mathbf{1}_{\{x \in [0,N]\}}  \mathbf{1}_{\{x \nleftrightarrow y \, \, \forall y \in  \mathcal{P} \} }. $$
	As before, by the Mecke formula \cite[Theorem 4.4]{LP}
	$$ \mathbb{E} [M(N)] = c \int_0^N P(x \nleftrightarrow y \, \, \forall y \in  \mathcal{P}) dx$$
	with the probability $P(x \nleftrightarrow y \, \, \forall y \in  \mathcal{P})$ given by
	\begin{align*} 
	& \sum_{n \in \N_0} 	P\Big( x \nleftrightarrow y \, \, \forall y \in  \mathcal{P} \Big| [0,N] \textnormal{ contains } n \textnormal{ Poisson points} \Big) \\
	& \quad \quad\quad\times P \Big( [0,N] \textnormal{ contains } n \textnormal{ Poisson points} \Big) \\
	&= \sum_{n \in \N_0} \prod_{k=1}^{N}	P\Big( x \nleftrightarrow y \, \, \forall y \in  \mathcal{P} \cap [k-1,k) \Big| [0,N] \textnormal{ contains } n \textnormal{ Poisson points} \Big) \\
	& \quad \quad\quad\times P \Big( [0,N] \textnormal{ contains } n \textnormal{ Poisson points} \Big) \\
	& \geq\sum_{n \in \N_0} \prod_{k=1}^{N}\exp \Big( -\beta n \frac{1}{(k-1)^2} \Big) P \Big( [0,N] \textnormal{ contains } n \textnormal{ Poisson points} \Big) \\
	& = \sum_{n \in \N_0} \exp \Big( -\beta n \sum_{k=1}^{N} \frac{1}{(k-1)^2} \Big) P \Big( [0,N] \textnormal{ contains } n \textnormal{ Poisson points} \Big) \\
	& \geq \sum_{n \in \N_0} \exp (-c\beta n) P \Big( [0,N] \textnormal{ contains } n \textnormal{ Poisson points} \Big) \equiv c >0,
	\end{align*}
	yielding that $ \mathbb{E} [M(N)] \geq cN$. Now we estimate the second moment of $M(N)$. Assume that $J_k=[k-1,k)$ for $k=1, \ldots, N$. We write
	\begin{align} \label{H2p1}
	\begin{split}
	M(N)^2 & = \sum_{k,j=1}^N \sum_{x\in \mathcal{P}} \sum_{v\in \mathcal{P}} \mathbf{1}_{\{x \in J_j\}} \mathbf{1}_{\{v \in J_k\}}  \mathbf{1}_{\{x \nleftrightarrow y \, \, \forall y \in  \mathcal{P} \} }  \mathbf{1}_{\{v \nleftrightarrow y \, \, \forall y \in  \mathcal{P} \} } \\
	& = \sum_{\substack{k,j=1\\ d(J_k, J_j) \leq \sqrt{N}}}^N \sum_{x\in \mathcal{P}} \sum_{v\in \mathcal{P}} \mathbf{1}_{\{x \in J_j\}} \mathbf{1}_{\{v \in J_k\}}  \mathbf{1}_{\{x \nleftrightarrow y \, \, \forall y \in  \mathcal{P} \} }  \mathbf{1}_{\{v \nleftrightarrow y \, \, \forall y \in  \mathcal{P} \} } \\
	& \quad + \sum_{\substack{k,j=1\\ d(J_k, J_j) > \sqrt{N}}}^N \sum_{x\in \mathcal{P}} \sum_{v\in \mathcal{P}} \mathbf{1}_{\{x \in J_j\}} \mathbf{1}_{\{v \in J_k\}}  \mathbf{1}_{\{x \nleftrightarrow y \, \, \forall y \in  \mathcal{P} \} }  \mathbf{1}_{\{v \nleftrightarrow y \, \, \forall y \in  \mathcal{P} \} } ,
	\end{split}
	\end{align}
	where $d(J_k, J_j)$ denotes the Euclidean distance between $J_k$ and $J_j$. Observe that
	\begin{align}\label{H2p2}
	\sum_{\substack{k,j=1\\ d(J_k, J_j) \leq \sqrt{N}}}^N\mathbb{E} \Big[ \sum_{x\in \mathcal{P}} \sum_{v\in \mathcal{P}} \mathbf{1}_{\{x \in J_j\}} \mathbf{1}_{\{v \in J_k\}}  \mathbf{1}_{\{x \nleftrightarrow y \, \, \forall y \in  \mathcal{P} \} }  \mathbf{1}_{\{v \nleftrightarrow y \, \, \forall y \in  \mathcal{P} \} } \Big]	\leq c N^{\frac{3}{2}},
	\end{align}
	since there are at most $c N^{\frac{3}{2}}$ pairs of intervals $J_k$ and $J_j$ with $d(J_k, J_j) \leq \sqrt{N}$. Now we estimate the expected value
	\begin{align*}
	& \mathbb{E} \Big[ \sum_{\substack{k,j=1\\ d(J_k, J_j) > \sqrt{N}}}^N \sum_{x\in \mathcal{P}} \sum_{v\in \mathcal{P}} \mathbf{1}_{\{x \in J_j\}} \mathbf{1}_{\{v \in J_k\}}  \mathbf{1}_{\{x \nleftrightarrow y \, \, \forall y \in  \mathcal{P} \} }  \mathbf{1}_{\{v \nleftrightarrow y \, \, \forall y \in  \mathcal{P} \} } \Big] \\
	& = \sum_{\substack{k,j=1\\ d(J_k, J_j) > \sqrt{N}}}^N \mathbb{E} \Big[  \sum_{x\in \mathcal{P}} \sum_{v\in \mathcal{P}} \mathbf{1}_{\{x \in J_j\}} \mathbf{1}_{\{v \in J_k\}}  \mathbf{1}_{\{x \nleftrightarrow y \, \, \forall y \in  \mathcal{P} \} }  \mathbf{1}_{\{v \nleftrightarrow y \, \, \forall y \in  \mathcal{P} \} } \Big] .
	\end{align*}
	For simplicity let $J_k =[0,1]$ and $J_j = [1+ \sqrt{N}, 2+\sqrt{N}]$. As before, using the Mecke formula
	\begin{align*}
	& \mathbb{E} \Big[  \sum_{x\in \mathcal{P}} \sum_{v\in \mathcal{P}} \mathbf{1}_{\{x \in J_j\}} \mathbf{1}_{\{v \in J_k\}}  \mathbf{1}_{\{x \nleftrightarrow y \, \, \forall y \in  \mathcal{P} \} }  \mathbf{1}_{\{v \nleftrightarrow y \, \, \forall y \in  \mathcal{P} \} } \Big] \\
	& = c \int_0^1 \int_{1+ \sqrt{N}}^{2+ \sqrt{N}} P \big( x \nleftrightarrow y \, \, \forall y \in  \mathcal{P}, \, \, v \nleftrightarrow y \, \, \forall y \in  \mathcal{P} \big) dx dv \\
	& =  c \int_0^1 \int_{1+ \sqrt{N}}^{2+ \sqrt{N}} P \big( x \nleftrightarrow y \, \, \forall y \in  \mathcal{P} \big| v \nleftrightarrow y \, \, \forall y \in  \mathcal{P} \big) P\big( v \nleftrightarrow y \, \, \forall y \in  \mathcal{P} \big) dx dv \\
	& =  c \int_0^1 \int_{1+ \sqrt{N}}^{2+ \sqrt{N}} P \big( x \nleftrightarrow y \, \, \forall y \in  \mathcal{P} \setminus \{v\} \big) P\big( v \nleftrightarrow y \, \, \forall y \in  \mathcal{P} \big) dx dv \\
	& =  c \int_0^1 \int_{1+ \sqrt{N}}^{2+ \sqrt{N}} \frac{P ( x \nleftrightarrow y \, \, \forall y \in  \mathcal{P} )}{P(x \nleftrightarrow v)} P\big( v \nleftrightarrow y \, \, \forall y \in  \mathcal{P} \big) dx dv \\
	& \leq c \int_0^1 \int_{1+ \sqrt{N}}^{2+ \sqrt{N}} {P ( x \nleftrightarrow y \, \, \forall y \in  \mathcal{P} )} \exp \big( \beta \frac{1}{d(I_k, I_j)^2} \big) P\big( v \nleftrightarrow y \, \, \forall y \in  \mathcal{P} \big) dx dv \\
	& \leq c (1+\frac{1}{N}) \int_0^1 \int_{1+ \sqrt{N}}^{2+ \sqrt{N}} {P ( x \nleftrightarrow y \, \, \forall y \in  \mathcal{P} )} P\big( v \nleftrightarrow y \, \, \forall y \in  \mathcal{P} \big) dx dv   \\
	& = c (1+\frac{1}{N}) \mathbb{E}[M(N)]^2,
	\end{align*}
	where we used the Mecke formula again in the last equality. Combining this with \eqref{H2p1} and \eqref{H2p2} we get
	$$ \mathbb{E} [M(N)^2] \leq c N^{\frac{3}{2}} + c (1+\frac{1}{N}) \mathbb{E}[M(N)]^2,$$
	which yields that $\operatorname{Var} (M(N)) \leq cN^{\frac{3}{2}}.$ An application of Chebyshev's inequality then easily shows that $M(N) \geq cN$ with high probability.
\end{proof}
Again let $J \subset [0,N]$ be an interval and assume that $J_k=[k-1,k)$ for $k=1, \ldots, N$. Now we say that a vertex $x \in \mathcal{P} \cap J \cap J_k$, for some $k \in \{1, \ldots N\}$, is almost isolated with respect to $J$ if either there is no edge connecting $x$ to another point $y \in \mathcal{P} \cap J$ or $x$ is only connected to points in $J \cap (J_{k-1} \cup J_{k+1})$ if $2 \leq k \leq N-1$, to points in $J \cap J_2$ if $k=1$ or to points in $J \cap J_{N-1}$ if $k=N$. We denote by $K(J)$ the number of such vertices.
Lemma \ref{H2} implies the following result.
\begin{cor} \label{H23}
	There is a constant $c$ such that
	$$\lim_{|J| \to \infty} P\Big( K(J) \geq c |J| \Big) = 1.$$
\end{cor}

Having established Lemma \ref{H2} and Corollary \ref{H23} our strategy now is to show that the bound $D(N) \geq N^\eta$ holds with high probability for a certain constant $0 < \eta < 1$. We say that an interval $J_j$ of the form $J_j = [a,b]$ is a cut interval if the number of edges from $[0,a]$ to $[b,N]$ is zero. From a calculation similar to the one made in the proof of Lemma \ref{H2} we get that the expected number of cut intervals is at least $cN$. We now show that $D(N) \geq N^\eta$ with high probability for a certain constant $0 < \eta < 1$. Indeed, divide $[0,N]$ into $N^{\frac{2}{3}}$ intervals $J_1, \ldots, J_{N^{\frac{2}{3}}}$ each of side length $N^{\frac{1}{3}}$. By Corollary \ref{H23} the number of almost isolated intervals is at least $cN^{\frac{2}{3}}$ with high probability. Fix $J_i = [a,b]$. We say that a vertex point $x \in J_i$ is a local cut point if it is almost isolated with respect to $J_i$. Let $C_i = C(J_i)$ be the number of local cut points. By Corollary \ref{H23} we have that $\mathbb{E}[C_i] \geq N^{\frac{1}{3}}$. Moreover, if $P_{J_i}$ denotes the number of Poisson points in $J_i$ then
\begin{align*}
\operatorname{Var} (C_i) & \leq \mathbb{E}[C_i^2] \leq \mathbb{E}[P_{J_i}^2] \\
& = \rho^2 |J_i|^2 + \rho |J_i| \leq c |J_i|^2.
\end{align*}
Thus, as argued before, an application of Chebyshev's inequality yields that there is an almost isolated interval which contains at least $cN^{\frac{1}{3}}$ local cut points with high probability. Denote this interval by $J_{i^*}$. Let $K=K(J_{i^*})$ be the number of edges between $J_{i^*}$ and $J_{i^*-1}$. We estimate $\mathbb{E}[ K(J_{i^*})]$ as follows. We obtain
\begin{align*}
\mathbb{E}[ K(J_{i^*})] & \leq c \int_0^{N^{\frac{1}{3}}} \int_{N^{\frac{1}{3}}}^{2N^{\frac{1}{3}}} g(x-y) dx dy \\
& \leq c \int_0^{N^{\frac{1}{3}}} \int_{N^{\frac{1}{3}} + \frac{1}{N^{\frac{1}{3}}}}^{2N^{\frac{1}{3}}} (x-y)^{-2} dx dy + c \int_0^{N^{\frac{1}{3}}} \int_{N^{\frac{1}{3}}}^{N^{\frac{1}{3}} + \frac{1}{N^{\frac{1}{3}}}} dx dy \\
& \leq c \int_0^{N^{\frac{1}{3}}} (N^{\frac{1}{3}} + \frac{1}{N^{\frac{1}{3}}}-y)^{-1} dx dy + c \\
& \leq c \log ( N^{\frac{1}{3}} + \frac{1}{N^{\frac{1}{3}}}) + \log (N^{\frac{1}{3}}) \leq c \log N.
\end{align*}
A similar calculation shows that the same estimate holds for the expected number of edges from $J_{i^*}$ to $J_{i^*+1}$. By an application of the Markov inequality we conclude that there exist at most $2 \log^2 N$ edges exiting the almost isolated interval $J_{i^*}$ with high probability. Recall that there are at least $c N^{\frac{1}{3}}$ local cut points with respect to $J_{i^*}$ with high probability. Thus there exist two local cut points, say $i_1$ and $i_2$ such that $[i_1, i_2]$ contains more than $c N^{\frac{1}{3}} / \log^2 N \geq c N^{\eta}$ local cut points for some $\eta \in (0,1)$ and there are no edges exiting $[i_1, i_2]$. Take the $(\frac{1}{3})L^{th}$ and $(\frac{2}{3})L^{th}$ local cut points in $[i_1, i_2]$, with $L$ denoting the number of local cut points in $[i_1, i_2]$. By definition the length of the shortest path between such points is at least $\frac{1}{3}L \geq c N^{\eta}$ with high probability. This implies that $D(N) \geq cN^{\eta}$ with high probability. We finished the proof of the lower bound. \hfill $\Box$

\section{The case $d < s < 2d$}
As explained in the introduction it suffices to prove the lower bound. This will follow from the following assertion, which is the analogue of \cite[Proposition 7.4]{DW}.

\begin{lemma} \label{H3}
	Denote by $P_{0,x}$ the probability measure given by $P_{0,x}( \cdot) = P( \cdot | 0,x \in \mathcal{P})$. There exists a constant $c \in (0, \infty)$ such that
	$$ \lim_{\|x\| \to \infty} P_{0,x} \Big(  D(0,x) \geq c\log \|x\|  \Big) = 1.$$
\end{lemma}

\begin{proof}
	Let $n \in \mathbb{N}$ and $0,x \in \mathcal{P}$. Define $x_0 =0$ and $x_n =x$. Denote by $\mathbb{E}_{0,x}$ the expected value with respect to $P_{0,x}$. We have
	\begin{align*}
	P_{0,x} \big( D(0,x) = n \big) & \leq \mathbb{E}_{0,x} \big[ \mathbf{1}_{\{\text{there is a path from $x_0$ to $x_n$ of length $n$}\}} \big] \\
	& \leq \mathbb{E}_{0,x} \Big[ \sum_{x_1, \ldots, x_{n-1} \in \mathcal{P}} \mathbf{1}_{\{x_0 \leftrightarrow x_1 \leftrightarrow \ldots \leftrightarrow x_{n-1} \leftrightarrow x_n\}} \Big] \\
	& \leq c \rho^{n-1} \int_\rd dx_1 \ldots \int_\rd dx_{n-1} \prod_{i=1}^n g(x_i - x_{i-1}),
	\end{align*}
	where we used the Mecke formula in the last step. Using the substitution of $x_i$ by $x_i - \sum_{k=1}^{i-1} x_k$ inductively for $i=1, \ldots, n-1$ we further obtain
	\begin{align*}
	P_{0,x} \big( D(0,x) = n \big) & \leq c \rho^{n-1} \int_\rd dx_1 \ldots \int_\rd dx_{n-1} \prod_{i=1}^n h(x_i - x_{i-1}) \\
	& \leq  c \rho^{n-1} \int_\rd dx_1 \ldots \int_\rd dx_{n-1} \prod_{i=1}^{n-1} h(x_i) h\Big( x_i - \sum_{k=1}^{n-1} x_{k} \Big)
	\end{align*}
	for the function $h(x) = \min (1, \| x\|^{-s})$, $x \in \rd$. Note that the fact $\|x_i\| < \frac{\|x\|}{n}$ for all $i=1, \ldots, n-1$ implies $\| x - \sum_{k=1}^{n-1} x_{k}\| \geq \frac{\|x\|}{n}$ and $h( x_i - \sum_{k=1}^{n-1} x_{k} )$ can be bounded from above by $\sup_{y : \|y\| \geq \frac{\|x\|}{n}} h(y)$. Otherwise, if there is at least one $i \in \{1, \ldots, n-1\}$ such that $\|x_i\| \geq \frac{\|x\|}{n}$ in such a case we can bound $h(x_i)$ by $\sup_{y : \|y\| \geq \frac{\|x\|}{n}} h(y)$. Using this observation we derive
	\begin{align} \label{H3p1}
	P_{0,x} \big( D(0,x) = n \big) & \leq c n \sup_{y : \|y\| \geq \frac{\|x\|}{n}} h(y) \Big( \rho \int_\rd h(y) dy \Big)^{n-1}.
	\end{align}
	Note that
	$$\int_\rd h(y) dy  < \infty,$$
	since $s>d$ by assumption. For arbitrary, but fixed $\mu \in (0, \infty)$ choose $\|x\|$ large enough such that $\mu \log \|x\| \geq n$. Then
	$$ \sup_{y : \|y\| \geq \frac{\|x\|}{n}} h(y) \leq \mu^s (\log \|x\|)^s \| x\|^{-s}.$$
	Inserting this inequality into \eqref{H3p1} we obtain for $\|x\|$ sufficiently large
	\begin{align*}
	P_{0,x} \big( D(0,x) = n \big) & \leq c n \Big( \rho \int_\rd h(y) dy \Big)^{n-1} \mu^s (\log \|x\|)^s \| x\|^{-s} \\
	& \leq \Big( 1+ \rho \int_\rd h(y) dy \Big)^{\mu \log \|x\|} \mu^{s+1} (\log \|x\|)^{s+1} \|x \|^{-s} \\
	& = \mu^{s+1} (\log \|x\|)^{s+1} \|x\|^{-s + \mu \log (1+ \rho \int_\rd h(y) dy)} \leq \|x \|^{-\delta}
	\end{align*}
	for some constant $\delta \in (0, \infty)$, once $\mu$ is sufficiently small satisfying $$\mu \log \Big(1+ \rho \int_\rd h(y) dy\Big) < s.$$
	We conclude that
	\begin{align*}
	P_{0,x} \big( D(0,x) \leq c \log \|x\| \big) = \sum_{n=1}^{c \log \|x\|} 	P_{0,x} \big( D(0,x) = n \big) \leq c \big( \log \|x\| \big) \|x \|^{-\delta}
	\end{align*}
	for some sufficiently small constant $c$. From this the assertion follows.
\end{proof}

\section{The case $s=d$}

We begin by proving the lower bound.

\begin{remark} \label{mr1}
	As mentioned in the discussions after Theorem \ref{MR}, due to the proof of \cite[Theorem 3.4]{DW} it follows that its statement is also true in the present model under investigation. The latter Theorem states that for the component $\mathcal{C}_N$, under the assumption $s\in(d,2d)$, we have $|\mathcal{C}_N| \geq cN^d$ for some $c\in (0, \infty)$ with high probability. As before $|\mathcal{C}_N|$ denotes the size, i.e. number of nodes, of the cluster $|\mathcal{C}_N|$. Observe that, since the probability of connecting two vertices is monotonically decreasing in $s$, the status of the edges, and in particular the size of the component $\mathcal{C}_N$ in case $s=d$, stochastically dominates the one in case $s\in(d,2d)$. Thus, in the following we can make use of the fact that $|\mathcal{C}_N| \geq cN^d$ for some $c\in (0, \infty)$ with high probability.
\end{remark}

\subsection{Lower bound}

Assume that $I_1, \ldots, I_{N^d}$ denote subcubes  each of unit side length and are a partition of $[0,N]^d$. Let $l \in \{1, \ldots, N^d\}$ and take one of these subcubes $I_l$ and for $m \in \N$ define $B(m)$ as the total number of nodes in $I:=\bigcup_{k=1,\\ \substack{k\neq l}}^{N^d} I_k$ reachable from $I_l$ by paths with length at most $m$. Without loss of generality assume that $l=1$. Then
\begin{align*}
B(m) \leq \sum_{j=1}^m \sum_{x\in \mathcal{P}} \mathbf{1}_{\{x \in I_1\}} \sum_{y_1\in \mathcal{P}}  \mathbf{1}_{\{y_1 \in I\}} \ldots \sum_{y_j\in \mathcal{P}}  \mathbf{1}_{\{y_j \in I\}} \mathbf{1}_{\{x \leftrightarrow 
	y_1 \leftrightarrow y_2 \leftrightarrow \ldots \leftrightarrow y_{j-1} \leftrightarrow y_j\}} .
\end{align*}
As before, from the Mecke formula we get
\begin{align} \label{6p1}
\begin{split}
\mathbb{E} [B(m)] & \leq c \sum_{j=1}^m \int_{I_1} dx \int_I dy_1 \ldots \int_I dy_j P\big( x \leftrightarrow 
y_1 \leftrightarrow y_2 \leftrightarrow \ldots \leftrightarrow y_{j-1} \leftrightarrow y_j \big) \\
& \leq c \sum_{j=1}^m \int_{I_1} dx \int_I dy_1 \ldots \int_I dy_j \min (1, \| x-y_1\|^{-d}) \prod_{l=2}^j \min (1, \| y_l-y_{l-1}\|^{-d}) .
\end{split}
\end{align}
By a change to polar coordinates we get
\begin{align*}
\int_I \min (1, \| y_l-y_{l-1}\|^{-d}) dy_l &\leq c\log N, \quad l=2, \ldots, j, \\
\text{ and }  \int_I \min (1, \| x-y_{1}\|^{-d}) dy_1 &\leq c\log N.
\end{align*}
Hence, inserting this into \eqref{6p1}
\begin{align*}
\mathbb{E} [B(m)] \leq c \sum_{j=1}^m (\log N)^j \leq c \frac{(\log N)^{m+1} - 1}{(\log N) - 1} \leq c (\log N)^m.
\end{align*}
From the Markov inequality we deduce that
$$ P (B(m) \geq cN^d) \leq \frac{\mathbb{E}_{0,x} [B(m)]}{cN^d} \to 0$$
as $N \to \infty$ for
$$ m = \frac{(1-\varepsilon) \log N}{\log\log N},$$
where $\varepsilon >0$ is arbitrarily small and $c$ denotes a positive constant. From this we derive that there exists at least one node, which is connected to $I_1$ only by paths of length at least $m$ with high probability. Hence, together with Remark \ref{mr1} we conclude that $D(N) \geq cm$ with high probability as desired.

\subsection{Upper bound}
We now give a proof of the upper bound based on an idea presented in \cite{CGS}. Roughly spoken, the idea is that all paths of length $m$ for suitably chosen $m$ must have an endpoint inside a cube which is not too far away from the origin. Applying a result we established in the preceding section the distance within small cubes of the infinite cluster is also small enough. Combining these findings we will arrive at the desired upper bound.

We now make the discussion more precise. At first we investigate arbitrary paths of length $m$ with $m$ to be chosen later in this proof. Fix one element $x \in \mathcal{C}_\infty$ with $\|x\| \leq N$ for sufficiently large $N$. Consider all the paths $y_1 \leftrightarrow y_2 \leftrightarrow x$ of length 2 ending in $x$. Let 
$$ X_1 := \arg \min_{y_1 : y_1 \leftrightarrow y_2 \leftrightarrow x} \|y_1\|,$$
i.e. $X_1$ is the smallest, in norm, vertex connected to $x$ via a path of length at most 2. By construction we have $\|X_1\| \leq \|x\|$. Similarly we obtain a (random) vertex which we call $X_2$ connected to $X_1$ by a path of length $2$ with the property that $\|X_2\| \leq \|X_1\|$. We continue the procedure in this fashion for a total number of $m$ times. Our first target in this section is the following statement, which is an analogue to \cite[Lemma 10.1]{CGS}.

\begin{lemma} \label{H4}
	If $m = (2d+2) 2^{c+1} \frac{\log N}{\log \log N}$ for a sufficiently large constant $c$ then
	$$\| X_m\| \leq \exp \big( (\log N)^\frac{d}{2^c} \big)$$
	with probability at least $1 - \frac{1}{N^{2d}}.$
\end{lemma}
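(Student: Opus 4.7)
The plan is to set up a single-step recursion of the form $r_{i+1} \leq F(r_i)$ for $r_i := d(0, J_i)$, where each step holds with a failure probability small enough that a union bound over the $m$ iterations and the $N^d$ possible initial cubes $I$ leaves the total failure probability below $N^{-2d}$. Iterating the recursion and tracking how fast $\log r_i$ decreases will then yield the stated bound $r_m \leq \exp((\log N)^{d/2^c})$.

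To estimate $r_{i+1}$ at a given step I would count length-two paths from $J_i$ into the region near the origin. Such a path has the form $x \to z \to y$, where $x$ is a Poisson point in a vertex of $J_i$, $z$ is an intermediate Poisson point anywhere in $[0,N]^d$, and $y$ lies in the target cube $J^\ast$. Using the Mecke formula and the asymptotic $g(x) \sim \beta \|x\|^{-d}$ arising from $s=d$, the expected number of length-two paths from $J_i$ into a ball $B(0,R)$ around the origin reduces to an integral of the form $\int_{B(0,R)} \int_{\rd} g(x_0-z)\, g(z-y)\, dz\, dy$, where $x_0$ is a representative point of $J_i$. Splitting the $z$-integral over the regions $\|z\| \leq \|x_0\|/2$ and $\|z\| > \|x_0\|/2$, precisely where the critical scale $s=d$ produces a logarithmic factor, yields a lower bound of order $R^d (\log r_i)/r_i^d$ for this expected count. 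Hence as soon as $R \geq c\, r_i/(\log r_i)^{1/d}$ the expectation exceeds a large threshold, and a Chebyshev/second-moment argument, feasible because the edge variables are independent and the Poisson intensity is constant, converts this into a high-probability statement of the form $r_{i+1} \leq C\, r_i/(\log r_i)^{1/d}$.

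The iteration then proceeds by solving this recursion. Writing $\ell_i := \log r_i$, the inequality becomes approximately $\ell_{i+1} \leq \ell_i - \frac{1}{d} \log \ell_i$, so after $k$ steps one has $\ell_k \leq \log N - \frac{k}{d}\log\log N$ as long as $\ell_i$ remains of order $\log N$. A telescoping multi-scale argument, in which driving the target exponent from $d/2^j$ down to $d/2^{j+1}$ requires only an additional $O(\log N/\log\log N)$ iterations, accounts for the prefactor $(2d+2)\, 2^{c+1}$: after a suitable reindexing, the total number of iterations needed to bring $\ell_m$ down to $(\log N)^{d/2^c}$ has exactly the stated form. Since the step-wise failure probability can be reduced to $N^{-4d}$ at the cost of enlarging universal constants, the final union bound yields the claimed probability $1-N^{-2d}$.

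The main obstacle, I expect, is ensuring that the single-step recursion indeed has failure probability this small, uniformly in $r_i \geq \exp((\log N)^{d/2^c})$. The lower bound $R^d(\log r_i)/r_i^d$ on the expected count becomes most delicate precisely at the endpoint of the recursion, where $\log r_i$ has dropped well below $\log N$; there one must verify that the convolution estimate still yields a sufficiently large expected number of paths and that the second-moment bound is not dominated by atypical configurations of Poisson points in the intermediate cubes. Following the analogue treatment in \cite[Section 10]{CGS}, a careful decomposition of paths by the scale of their intermediate cube, combined with Poisson concentration for the number of points in each such cube, should handle this; adapting those estimates to the present continuum setting, where vertex positions are themselves random, is the principal technical task.
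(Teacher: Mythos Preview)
Your overall strategy---set up a one-step contraction for $r_i = d(0,J_i)$ and iterate---is the right shape, and it is also what the paper does (via Lemma~\ref{H5} and the argument of \cite[p.~337]{CGS}). But there is a genuine gap in the way you propose to control the probabilities.

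The problematic claim is that ``the step-wise failure probability can be reduced to $N^{-4d}$ at the cost of enlarging universal constants.'' It cannot. The random variable governing a single step is $B(H)$, the number of unit cubes $J$ with $d(0,J)\le d(0,H)$ that are connected to $H$ by an edge. One computes $\mathbb{E}[B(H)]\asymp \log d(0,H)$ and $\operatorname{Var}(B(H))\le \mathbb{E}[B(H)]$; a second-moment/Chebyshev argument therefore only yields
\[
P\bigl(B(H)\le \tfrac12\,\mathbb{E}[B(H)]\bigr)\ \le\ \frac{c}{\log d(0,H)}\ \le\ \frac{c}{(\log N)^{d/2^{c}}},
\]
which is polylogarithmically, not polynomially, small in $N$. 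Since $s=d$ is exactly the critical exponent, the mean degree into the relevant region is only logarithmic, and no enlargement of constants upgrades this to $N^{-4d}$; one would need a genuinely different tail bound, which your proposal does not supply. A union bound over $m\asymp \log N/\log\log N$ steps and $N^d$ starting cubes then fails badly to reach $N^{-2d}$.

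The fix, and what the paper does, is to abandon the per-step high-probability statement and instead establish a bound on the \emph{conditional expectation}: Lemma~\ref{H5} gives
\[
\mathbb{E}\bigl[d(0,J_r)\,\big|\,J_{r-1}=H\bigr]\ \le\ \frac{c\,d(0,H)}{(\log N)^{1/2^{c+1}}}
\]
whenever $d(0,H)>\exp((\log N)^{d/2^c})$. Note the contraction factor is the fixed quantity $(\log N)^{1/2^{c+1}}$, not your scale-dependent $(\log r_i)^{1/d}$; this is what makes the iteration clean. Iterating $m=(2d+2)2^{c+1}\log N/\log\log N$ times gives $\mathbb{E}[d(0,J_m)]\le N^{-(2d+1)+o(1)}$, and a \emph{single} application of Markov's inequality at the end yields the bound $1-N^{-2d}$. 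So the missing idea is: trade the per-step tail bound (which is too weak) for a per-step expectation bound, and postpone the probabilistic estimate to the very last step.
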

\begin{proof}
	Following the calculations made in \cite[p. 337]{CGS} Lemma \ref{H4} follows from Lemma \ref{H5} below. We omit the details.
\end{proof}

\begin{lemma} \label{H5}
	Let $\|y\| > \exp ( (\log N)^\frac{d}{2^c} )$.Then we have
	$$ \mathbb{E} \Big[\| X_r\| \Big| X_{r-1} = y \Big] \leq c \frac{\|y\|}{(\log N)^\frac{1}{2^{c+1}}}.$$
\end{lemma}
\begin{proof}
	Denote by $B(y)$ the total number of nodes which are connected to $X_{r-1}=y$ and which have a norm smaller than $\|y\|$, i.e.
	\begin{align*}
	B(y) = \sum_{v\in \mathcal{P}} \mathbf{1}_{\{ \|v\| \leq \|y\| \}} \mathbf{1}_{\{v \leftrightarrow y\}} .
	\end{align*}
	Note that $\|v-y\| \leq 2 \|y\|$ for each such node $v$ by the triangle inequality. We now show that there is a constant $c \in (0, \infty)$ such that
	$$c \log \|y\| \leq B(y)$$
	with high probability. We first calculate the expected value of $B(y)$. We get for sufficiently large $\|y\|$ that
	\begin{align*}
	\mathbb{E} [B(y)] &= c \int_{0 \leq \|v\| \leq \|y\|} g(v-y)dv = c \Big( \int_0^1 g(z) dz + \int_1^{\|y\|} \|z\|^{-d} dz \Big) \\
	& = c + c \log \|y\|. 
	\end{align*}
	Next we calculate the expected value of
	\begin{align*}
	B(y)^2 = \sum_{v\in \mathcal{P}} \sum_{w\in \mathcal{P}} \mathbf{1}_{\{ \|v\| \leq \|y\| \}} \mathbf{1}_{\{ \|w\| \leq \|y\| \}} \mathbf{1}_{\{v \leftrightarrow y\}} \mathbf{1}_{\{w \leftrightarrow y\}} .
	\end{align*}
	Here we get
	\begin{align*}
	\mathbb{E} [B(y)^2] & = \mathbb{E} \Big[ \sum_{v\in \mathcal{P}} \mathbf{1}_{\{ \|v\| \leq \|y\| \}}\mathbf{1}_{\{v \leftrightarrow y\}} \Big]  + \mathbb{E} \Big[\sum_{\substack{v,w \in \mathcal{P}\\ v \neq w}} \mathbf{1}_{\{ \|v\| \leq \|y\| \}} \mathbf{1}_{\{ \|w\| \leq \|y\| \}} \mathbf{1}_{\{v \leftrightarrow y\}} \mathbf{1}_{\{w \leftrightarrow y\}} \Big] \\
	& = \mathbb{E}[B(y)]  + \mathbb{E} \Big[\sum_{\substack{v,w \in \mathcal{P}\\ v \neq w}} \mathbf{1}_{\{ \|v\| \leq \|y\| \}} \mathbf{1}_{\{ \|w\| \leq \|y\| \}} \mathbf{1}_{\{v \leftrightarrow y\}} \mathbf{1}_{\{w \leftrightarrow y\}} \Big] \\
	& \leq \mathbb{E}[B(y)] + c \int_0^{\|y\|} \int_0^{\|y\|}  g(v-y) g(w-y)dvdw \\
	& = \mathbb{E}[B(y)] + \big(\mathbb{E}[B(y)]\big)^2,
	\end{align*}
	using which yields that $\operatorname{Var} (B(y)) \leq \mathbb{E} [B(y) ].$
	An application of Chebyshev's inequality then shows that
	\begin{align} \label{P1}
	P \Big( B(y) \leq c \log \|y\| \Big) \leq c \frac{1}{(\log N)^\frac{d}{2^c}}
	\end{align}
	for $\|y\| > \exp ( (\log N)^\frac{d}{2^c} ).$ We proceed as follows. Define
	$$ V(y) := \Big\{ z \in \rd : \| z \| \leq \frac{\|y\|}{(\log N)^\frac{1}{2^{c+1}}} \Big\}.$$
	Then the volume of $V(y)$ is at least $$| V(y)| = c  \frac{\|y\|^d}{(\log N)^\frac{d}{2^{c+1}}}.$$ Let $v$, $\|v\| \leq \|y\|$, be a vertex connected to $y$ (if it exists). Then $\|v-y\| \leq 2\|y\|$. We estimate the probability that $y$ is connected to no vertex in $V(y)$
	\begin{align*}
	P\big( v \nleftrightarrow z \, \, \forall z \in V(y) \big) & = \sum_{n \in \N_0} P\Big( v \nleftrightarrow z \, \, \forall z \in V(y) \Big| V(y) \textnormal{ contains } n \textnormal{ Poisson points} \Big) \\
	& \quad \quad P\big(V(y) \textnormal{ contains } n \textnormal{ Poisson points}\big) \\
	& \leq \sum_{n \in \N_0} \exp \big( -\beta n \frac{1}{2\|y\|^d} \big) P\big(V(y) \textnormal{ contains } n \textnormal{ Poisson points}\big) \\
	& = \exp \Big( -c | V(y)| \big( 1-e^{-\beta/(2\|y\|^d)} \big) \Big)
	\end{align*}
	Since $$\lim_{x \to 0} \frac{x}{1-e^{-x}} = 1,$$ for $0 \leq x \leq 1$ we can choose a constant $c$ such that $x \leq  c(1-e^{-x})$. We may choose $\|y\|$ sufficiently large such that $ \frac{\beta}{2\|y\|^d} \leq 1$. Thus, applying this inequality we arrive at
	\begin{align*}
	P\big( v \nleftrightarrow z \, \, \forall z \in V(y) \big) \leq \exp \Big( -c | V(y)| \frac{\beta}{2\|y\|^d} \Big) = \exp \Big( -\frac{c}{(\log N)^\frac{d}{2^{c+1}}} \Big).
	\end{align*}
	By \eqref{P1} with high probability $y$ has at least $c \log \|y\|$ vertices $v$ with $\|v\| \leq \|y\|$ connected to it. Conditioned on this event, the probability that no vertex in $V(y)$ is connected to $y$ by a path of length two is at most $$\exp\Big(-c \frac{\log\|y\|}{(\log N)^\frac{d}{2^{c+1}}}\Big) \leq \exp\Big( -(\log N)^\frac{d}{2^{c+1}} \Big)$$
	for $\|y\| > \exp((\log N)^\frac{d}{2^{c}})$. Hence, the probability that no node in $V(y)$ is connected to $y$ by a path of length two, is at most
	$$ c \frac{1}{(\log N)^\frac{d}{2^{c}}} + \exp\Big( -(\log N)^\frac{d}{2^{c+1}} \Big) \leq c \frac{1}{(\log N)^\frac{d}{2^{c}}} .$$
	In summary, conditioned on $X_{r-1} =y$ the bound $\|X_r\| \leq \frac{\|y\|}{(\log N)^\frac{1}{2^{c+1}}}$ holds with high probability. On the other hand, with probability one $\|X_r\| \leq \|X_{r-1}\|.$ This concludes the proof.
\end{proof}

Now we are in position to complete the proof of the upper bound. \newline \newline
\textit{Proof of Theorem \ref{MR}, part 4 (ii).} Again we remark that, since the probability of connecting two vertices is monotonically decreasing in $s$, the status of the edges in case $s=d$, stochastically dominates the one in case $s\in(d,2d)$. In particular, this implies that the graph distance between two vertices in case $s\in(d,2d)$ stochastically dominates the one in case $s=d$, i.e. the graph distances become smaller for $s=d$. Hence, we observe that in applying part 3 of the main Theorem we have for $x \in \mathcal{C}_\infty$ with $\|x\| = \exp ( (\log N)^\frac{d}{2^c} )$ and some $\delta \in (1, \infty)$ that
$$ D(0,x) \leq \big( \log N)^\frac{d}{2^c} \big) ^\delta \leq \log^{\frac{1}{2}} N$$
with high probability as soon as $\frac{2^c}{d} \geq 2\delta$ for some constant $c$. In particular
$$D(0,x) \leq c \frac{\log N}{\log \log N} , \quad \|x\| = \exp ( (\log N)^\frac{d}{2^c} ),$$
with high probability. But by Lemma \ref{H4} with probability at least $ 1- \frac{1}{N^{2d}}$ a vertex $y \in \mathcal{C}_\infty$ with $\|y\| =N$ is connected to such a vertex $x$ by a path of length $m \leq c\frac{\log N}{\log \log N}$. Thus, for $\|y\| =N$ and $\|x\| =\exp ( (\log N)^\frac{d}{2^c} )$ we have 
$$ D(x,y) \leq c \frac{\log N}{\log \log N}$$
with high probability. The fact that
$$ D(0,y) \leq D(0,x) +D(x,y)$$
implies the desired result. \hfill $\Box$

\section*{Acknowledgements}
The author would like to thank the anonymous referees and the Editor for their constructive comments that improved the quality of this paper.

\bibliographystyle{plain}

\end{document}